\DeclareMathOperator*{\argmin}{arg\,min}
\newcommand{\Def}[1]{\overset{\text{\tiny def}}{#1}}
\begin{document}
\begin{frontmatter}

\title{Input-to-State Stability of a Bilevel Proximal Gradient Descent Algorithm\thanksref{footnoteinfo}} 

\thanks[footnoteinfo]{Partially supported by United States Air Force Office of Scientific Research Grant number FA9550-20-1-0385.}

\author[UMich,UStutt]{Torbjørn Cunis} and 
\author[UMich]{Ilya Kolmanovsky} 

\address[UMich]{Department of Aerospace Engineering, University of Michigan,
   Ann~Arbor, MI 48109, USA (e-mail: {\tt \{tcunis | ilya\}@umich.edu})}
\address[UStutt]{Institute of Flight Mechanics and Control, University of Stuttgart, 
   70569 Stuttgart, Germany (e-mail: {\tt tcunis@ifr.uni-stuttgart.de})}

\begin{abstract}                
This paper studies convergence properties of inexact iterative solution schemes for bilevel optimization problems. Bilevel optimization problems \deleted[id=TC]{often} emerge in control-aware design optimization, \added[id=TC]{where} the system design parameters are optimized in the outer loop and a discrete-time control trajectory is optimized in the inner loop,  \replaced[id=TC]{but}{Bilevel optimization problems} also \replaced[id=TC]{arise}{emerge} in other domains \added[id=TC]{including} machine learning.  In the paper an interconnection of proximal gradient algorithms is \added[id=TC]{proposed} to solve the inner loop and outer loop optimization problems in the setting of control-aware design optimization and robustness is analyzed from a control-theoretic perspective. By employing input-to-state stability arguments, conditions are derived that ensure convergence of the interconnected scheme to the optimal solution for a class of the bilevel optimization problem. 
\end{abstract}

\begin{keyword}
Input-to-state stability,
Optimal control theory,
Proximal gradient descent,
Stability of nonlinear systems,
Time-distributed optimization.
\end{keyword}

\end{frontmatter}

\section{Introduction}

In bilevel optimization \citep[see][and references therein]{colson2007overview} the cost function and/or constraints of a high-level optimization problem are informed by the optimal value function of an lower-level optimization problem which, in turn, is parametrized by the \added[id=TC]{decision variables} of the higher level optimization problem.  

Bilevel optimization has numerous  applications.   For instance, in control co-design \citep[see, e.g.,][]{garcia2019control},
plant and controller parameters need to be simultaneously optimized, while in control-aware design \citep[such as described by][]{cunis2022control}, system design parameters are optimized in the outer loop  subject to the existence of an admissible state and control trajectory determined through the inner-loop optimization.  In those works, the problem is reformulated in such a way that the gradients of the value function of the inner loop optimization problem with respect to the parameters are employed for the outer loop optimization. \Citet{cunis2022control} argued that from the perspective of the integration with existing gradient-based multi-disciplinary design optimization solvers, such an approach is preferable to simultaneous optimization with respect to design and controller parameters.  In that work, such an approach is employed for optimizing wing shape parameters subject to glide slope following maneuver constraints at landing of a supersonic aircraft.

\Citet{bennett2008bilevel} and \citet{franceschi2018bilevel} have discussed bilevel optimization problems in machine learning, where the outer-loop optimization is performed with respect to the hyperparameters of the machine learning solution.  The implementation of robust reference governors \citep[see, e.g.,][and references therein]{garone2017reference} involves outer loop optimization with respect to the reference command and the inner loop optimization with respect to the disturbance sequence aimed at maximizing constraint violation.  Finally, leader-follower and Stackelberg games lend themselves naturally to bilevel programming \citep[see, e.g.,][]{basilico2017bilevel}.

\added[id=TC]{Typical for these bilevel optimization problems is that evaluating value function and and gradient of the inner-loop optimization is computationally expensive.}
In this paper,  a prototypical problem in bilevel optimization which is typical of control-aware design is considered. Here, an optimal control problem is solved in the inner loop and a parameter optimization problem being solved in the outer loop subject to the solution of the inner loop.   The aim is to demonstrate that the exact solution of the inner-loop optimization problem \added[id=TC]{is} not necessary for the overall convergence of the bilevel optimization.  This has obvious advantages when the time to compute the solution and the available computing power are restricted.

More specifically, we focus on a class of bilevel optimization problems where a discrete-time linear quadratic optimal control problem \added[id=TC]{with input constraints} is solved in the inner loop and both the linear model and the quadratic cost function depend on the parameter of the outer-loop optimization problem. The optimal value function of the inner loop optimization problem, which is thus a function of the parameter, \added[id=TC]{is to be minimized subject to additional constraints on the parameter}.  Within this setting, we propose a framework for the analysis of \deleted[id=TC]{inexact} bilevel optimization
based on \added[id=TC]{inexact} proximal gradient algorithms, where the number of iterations of the inner-loop optimizer remains fixed.  Thus, the gradient of the inner-loop value function computed only approximately is used for a proximal gradient descent with respect to the outer-loop \added[id=TC]{decision variables}.  By exploiting the control-theoretic \added[id=TC]{notion} of input-to-state stability (ISS) and a small gain theorem in the analysis, we derive sufficient
conditions for asymptotic stability and convergence of the overall bilevel optimization algorithm.

%

\paragraph*{Notation}
$\mathbb N$ and $\mathbb R$ are the sets of natural and real numbers, respectively. The extended real numbers are $\overline {\mathbb R} = \mathbb R \cup \{\infty\}$. A convex function $f: \mathbb R^n \to \overline {\mathbb R}$ is proper if and only if its domain, $\{ x \in \mathbb R^n \, | \, f(x) \leq \infty \}$, is nonempty. For some $m \in \mathbb N$, the set of positive semi-definite (resp., positive definite) matrices in $\mathbb R^{m \times m}$ is denoted by $\mathbb S_m^+$ (resp., $\mathbb S_m^{++}$). The Euclidean norm and inner product on $\mathbb R^n$ are denoted by $| \cdot |$ and $\langle \cdot, \cdot \rangle$. For a function $\alpha: \mathbb R_{\geq 0} \to \mathbb R_{\geq 0}$ we write that $\alpha \in \mathcal K$ if and only if $\alpha$ is continuous, positive definite, and strictly increasing; and that $\alpha \in \mathcal K_\infty$ if and only if $\alpha \in \mathcal K$ as well as $\alpha(r) \to \infty$ if $r \to \infty$. Moreover, $\mathcal {KL}$ is the class of continuous functions $\beta: \mathbb R_{\geq 0} \times \mathbb R_{\geq 0} \to \mathbb R_{\geq 0}$ satisfying $\beta(\cdot, s) \in \mathcal K$ for all $s \geq 0$ and $\beta(r, \cdot)$ is decreasing with $\lim_{s \to \infty} \beta(r, s) = 0$ for all $r \geq 0$.

\section{Preliminaries}
Given a proper convex and lower semicontinuous function $f: \mathbb R^n \to \overline {\mathbb R}$, 
the proximity operator of $f$ maps $x$ to the unique solution of
\begin{align*}
    \operatorname{prox}_f(x) = \argmin_{y \in \mathbb R^n} f(y) + \frac{1}{2} | x - y |^2
\end{align*}
for all $x \in \mathbb R^n$. 
If $f$ is the indicator function of a convex set $\mathcal C \subset \mathbb R^n$, the proximity operator of $f$ corresponds to a projection onto~$\mathcal C$. 

We make extensive use of the fact that the proximity operator is nonexpansive as detailed in the next statement. Further properties can be found in the appendix.

\begin{property}[\citealp{Rockafellar1976}]
    \label{prop:prox-nonexp}
    Let $f: \mathbb R^n \to \overline {\mathbb R}$ be a proper convex and lower semicontinuous function and take $\nu > 0$; the proximity operator satisfies
    \begin{align*}
        \big| \operatorname{prox}_{\nu f}(x_1) - \operatorname{prox}_{\nu f}(x_2) \big| \leq | x_1 - x_2 |
    \end{align*}
    for all $x_1, x_2 \in \mathbb R^n$.
\end{property}

\subsection{Problem Statement}
Consider a system represented by a parameter-dependent model of the form
\begin{align}
    \label{eq:linear-system}
    \begin{aligned}
        x_{k+1} &= A(p) x_k + B(p) u_k + e(p)
    \end{aligned}
\end{align}
with initial condition $x_0 \in \mathbb R^n$, control input $u_k \in \mathbb R^m$,   $k \in \mathbb N$, and parameter $p \in \mathbb R^l$; the functions $A: \mathbb R^l \to \mathbb R^{n \times n}$, $B: \mathbb R^l \to \mathbb R^{n \times m}$, and $e: \mathbb R^l \to \mathbb R^n$ are smooth. 

We define the finite-horizon quadratic cost
\begin{align}
	\label{eq:quadratic-cost}
    J &= \frac{1}{2} x_N^\top S(p) x_N + \frac{1}{2} \sum_{k=0}^{N-1} \big( x_k^\top Q(p) x_k + u_k^\top R(p) u_k \big),
\end{align}
where $N \in \mathbb N$ is the horizon, which is evaluated on the trajectories of (\ref{eq:linear-system}) for a specified initial condition $x_0$.
We assume that $R: \mathbb R^l \to \mathbb S_m^{++}$ and $Q, S: \mathbb R^l \to \mathbb S_n^+$ are smooth. Collecting $\mathbf u = (u_0^\top, \ldots, u_{N-1}^\top)^\top$, the state sequence $\mathbf x = (x_1^\top, \ldots, x_N^\top)^\top$ satisfies
\begin{align*}
    {\mathbf x} = \hat A(p) x_0 + \hat B(p) \mathbf u + \hat e(p)
\end{align*}
for some constant $x_0$, where $\hat A(p)$ and $\hat B(p)$ are matrix-valued and $\hat e(p)$ is vector valued; and 
the cost $J$ is a quadratic function in $\mathbf u$ and smooth in $p$, viz.
\begin{align}\label{equ:costf}
    J(\mathbf u, p) = \mathbf u^\top H(p) \mathbf u + g(p)^\top \mathbf u + c(p)
\end{align}
where $H(p)$ is matrix-valued, $g(p)$ is vector-valued, and $c(p)$ is scalar; all functions are smooth in $p$.  Note that, under our assumptions, $H(p)$ is  positive-definite.

Suppose $P: \mathbb R^l \to \overline {\mathbb R}$ and $U: \mathbb R^{Nm} \to \overline {\mathbb R}$ are proper convex and lower semicontinuous penalty functions for $p$ and $\mathbf u$, respectively, with compact domains $\mathcal P$ and $\mathcal U$.
Given some $p \in \mathbb R^l$, the cost function $J(p, \mathbf u) + U(\mathbf u)$ is strongly convex in $\mathbf u$ with optimal value $\bar J: p \mapsto \min_{\mathbf u} J(\mathbf u, p) + U(\mathbf u)$ and thus has a unique optimal solution $\bar {\mathbf u}(p)$. 
We consider the following optimization problem.

\begin{problem}
    Minimize $\bar J(p) + P(p)$ for $p \in \mathbb R^l$.
\end{problem}

We adopt the \added[id=TC]{bilevel} proximal gradient descent algorithm
\begin{subequations}
    \label{eq:dynopt}
\begin{align}
    \label{eq:dynopt-outer}
    p^{\ell+1} &= \operatorname{prox}_P \big[ p^\ell - \nu \widehat {\nabla_p J}(\mathbf u^{\ell+1}, p^\ell)^\top \big] \\
    \mathbf u^{\ell+1} &= \operatorname{Arg\,min}_\kappa J(\cdot, p^\ell) + U(\cdot)
\end{align}
\end{subequations}
with $\kappa, \ell \in \mathbb N$ and $\nu > 0$, where $\operatorname{Arg\,min}_\kappa$ denotes a $\kappa$-step suboptimal solution to the parametrized optimal control problem 
and $\widehat {\nabla_p J}$ denotes the vector of estimated partial derivatives of $\bar J$ at $p^\ell$ based on the suboptimal solution $\mathbf u^{\ell+1}$.

Without state constraints, the optimal control solution can be approximated by the \added[id=TC]{inner-loop} finite-horizon proximal gradient descent iteration
\begin{subequations}
    \label{eq:dynopt-inner}
\begin{align}
    \mathbf u^\ell_{k+1} &= \operatorname{prox}_{U} \big[ \mathbf u^\ell_k - \mu \nabla_{\mathbf u} J(\mathbf u^\ell_k, p^\ell)^\top \big] \\
    \mathbf u^{\ell+1}_0 &= \mathbf u^\ell_\kappa
\end{align}
\end{subequations}
for all $k \in \{0, \ldots, \kappa-1\}$ and with $\mu > 0$. 



Denote by $\mathcal P_\star \subset \mathcal P$ the set of minima of $\bar J + P$.
We aim to prove a sufficient condition on $\mu$ and $\nu$ as well as $\kappa$ such that
\begin{align}
    p^\ell \to \mathcal P_\star \quad \text{and} \quad \mathbf u_\ell^\kappa \to \bar {\mathbf u}(p^\ell)
\end{align}
as $\ell \to \infty$.

The dynamics in \eqref{eq:dynopt} can be viewed as an interconnection of discrete-time systems, where the state of one system -- here, the parameter $p$ and current solution $\mathbf u$ -- enters as disturbance into the other -- through the estimated partial derivative vector and parametrized optimization, respectively. To describe a stable behaviour of the interconnection, we make use of the notion of {input-to-state stability} (ISS) firstly introduced by \citet{Sontag1995a}.
As $\mathcal P_\star$ may consist of more than one element, we rely on a since developed generalization called $\omega$ISS.

\subsection{Size functions}
Let $\mathbb X \subseteq \mathbb R^n$ be an open set and $\mathcal A \subset \mathbb X$ be compact. A {\em measurement function} (on $\mathbb X)$ is a continuous function $\omega: \mathbb X \to \mathbb R_{\geq 0}$ that is positive semidefinite.

\begin{definition}
    A measurement function $\omega$ on $\mathbb X$ is a {\em size function} for $\mathcal A$ if and only if $\omega$ vanishes exactly on $\mathcal A$ as well as $\omega(\xi_k) \to \infty$ for any sequence $\{\xi_k\}_{k \geq 0} \subset \mathbb X$, if either $\xi_k \to \partial \mathbb X$ or $| \xi_k | \to \infty$.
\end{definition}

Size functions have also been called {\em proper indicators} \citep{Kellett2012,Noroozi2018}. If $\mathbb X$ is the full space, then the distance mapping 
\begin{align*}
    \operatorname{dist}_{\mathcal A}(x) \Def= \min_{\xi \in \mathcal A} | x - \xi |
\end{align*}
is a simple example for a size function.
The following result allows us to compare size functions and to establish a notion of equivalence.

\begin{lemma}[\citealp{Sontag2022}, Corollary~2.7]
    \label{lem:size-comparison}
    Let $\omega_1: \mathbb X \to \mathbb R_{\geq 0}$ be a size function for $\mathcal A$; then any measurement function $\omega_2$ on $\mathbb X$ is a size function (for $\mathcal A$) if and only if there exists $\alpha_1, \alpha_2 \in \mathcal K_\infty$ satisfying
    \begin{align*}
        \alpha_1(\omega_1(x)) \leq \omega_2(x) \leq \alpha_2(\omega_1(x))
    \end{align*}
    for all $x \in \mathbb X$.
    $\lhd$
\end{lemma}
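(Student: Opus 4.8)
The plan is to prove the two implications separately, with the forward (\emph{if}) direction reducing to a direct verification and the converse (\emph{only if}) direction requiring an explicit construction of the comparison functions.

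First I would treat the \emph{if} direction. Assume $\alpha_1,\alpha_2\in\mathcal K_\infty$ satisfy the sandwich. Since $\omega_2$ is already assumed to be a measurement function, it remains only to verify the two defining properties of a size function for $\mathcal A$. For the vanishing set: if $x\in\mathcal A$ then $\omega_1(x)=0$, so $\omega_2(x)\le\alpha_2(0)=0$ and hence $\omega_2(x)=0$; conversely, $\omega_2(x)=0$ forces $\alpha_1(\omega_1(x))=0$, and positive definiteness of $\alpha_1$ gives $\omega_1(x)=0$, i.e.\ $x\in\mathcal A$. For properness: along any sequence with $\xi_k\to\partial\mathbb X$ or $|\xi_k|\to\infty$ we have $\omega_1(\xi_k)\to\infty$ because $\omega_1$ is a size function, whence $\omega_2(\xi_k)\ge\alpha_1(\omega_1(\xi_k))\to\infty$ since $\alpha_1\in\mathcal K_\infty$. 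Thus $\omega_2$ is a size function.

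Next I would set up the \emph{only if} direction, where both $\omega_1$ and $\omega_2$ are size functions. The geometric fact driving everything is that each sublevel set $\Omega_r=\{x\in\mathbb X : \omega_1(x)\le r\}$ is compact: it is closed by continuity of $\omega_1$, and any sequence in $\Omega_r$ that approached $\partial\mathbb X$ or escaped to infinity would force $\omega_1\to\infty>r$, so every such sequence has a convergent subsequence with limit in $\Omega_r\subset\mathbb X$. I then define the nondecreasing envelope $\bar\rho(r)=\max\{\omega_2(x) : x\in\Omega_r\}$, which is finite by compactness, satisfies $\omega_2(x)\le\bar\rho(\omega_1(x))$ for all $x$, and has $\bar\rho(0)=\max_{\mathcal A}\omega_2=0$. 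A standard comparison lemma then majorizes $\bar\rho$ by some $\alpha_2\in\mathcal K_\infty$, e.g.\ the smoothing $\alpha_2(r)=\tfrac1r\int_r^{2r}\bar\rho(s)\,ds+r$, yielding the upper bound $\omega_2(x)\le\alpha_2(\omega_1(x))$.

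The lower bound is where the main obstacle lies. I would set $\underline{\rho}(r)=\inf\{\omega_2(x) : \omega_1(x)\ge r\}$, which is nondecreasing, satisfies $\omega_2(x)\ge\underline{\rho}(\omega_1(x))$, and has $\underline{\rho}(0)=0$. The crux is to show $\underline{\rho}(r)>0$ for $r>0$ and $\underline{\rho}(r)\to\infty$, and here \emph{both} size-function hypotheses must be combined. If some sequence $x_k$ with $\omega_1(x_k)\ge r$ had $\omega_2(x_k)\to0$, then boundedness of $\omega_2(x_k)$ rules out escape to $\partial\mathbb X$ or infinity (as $\omega_2$ is a size function), so a subsequence converges to $x^\star\in\mathbb X$ with $\omega_2(x^\star)=0$, hence $x^\star\in\mathcal A$ and $\omega_1(x^\star)=0$, contradicting $\omega_1(x^\star)\ge r$ by continuity; this gives $\underline{\rho}(r)>0$. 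An analogous argument, taking $\omega_1(x_k)\to\infty$ while $\omega_2(x_k)$ stays bounded, produces a convergent subsequence with $\omega_1(x_{k_j})\to\omega_1(x^\star)<\infty$, contradicting $\omega_1(x_{k_j})\to\infty$, so $\underline{\rho}(r)\to\infty$. A second standard comparison lemma then minorizes $\underline{\rho}$ by some $\alpha_1\in\mathcal K_\infty$, completing the sandwich. I expect the compactness and no-escape arguments of this last paragraph to be the delicate part; the two envelope-smoothing steps are routine.
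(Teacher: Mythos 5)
The paper itself does not prove this lemma: it is imported verbatim from \citet[Corollary~2.7]{Sontag2022} (the trailing $\lhd$ in this paper marks results quoted without proof), so there is no in-paper argument to compare yours against. Judged on its own, your proof is correct and follows the standard route for results of this kind: the \emph{if} direction is a direct verification that the sandwich transfers both the vanishing set and the properness property from $\omega_1$ to $\omega_2$, and the \emph{only if} direction builds the comparison functions from the monotone envelopes $\bar\rho(r)=\max\{\omega_2(x):\omega_1(x)\le r\}$ and $\underline\rho(r)=\inf\{\omega_2(x):\omega_1(x)\ge r\}$ after establishing compactness of the sublevel sets of $\omega_1$. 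Your handling of the lower envelope, where both size-function hypotheses are combined to rule out escape to $\partial\mathbb X$ or to infinity before extracting a convergent subsequence, is indeed the delicate point, and you get it right.

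One detail you gloss over in the upper bound: for $\alpha_2(r)=\tfrac1r\int_r^{2r}\bar\rho(s)\,ds+r$ to lie in $\mathcal K_\infty$ you need not just $\bar\rho(0)=0$ but right-continuity of $\bar\rho$ at the origin, i.e.\ $\bar\rho(r)\to0$ as $r\downarrow0$; a nondecreasing function may jump at $0$, and if it did, your $\alpha_2$ would have a positive limit at $0$ and no class-$\mathcal K_\infty$ majorization of the form $\omega_2\le\alpha_2\circ\omega_1$ could exist. The fact does hold here, by the very compactness argument you already use for $\underline\rho$: if $\omega_1(x_k)\to0$ while $\omega_2(x_k)\ge\epsilon>0$, then $\{x_k\}$ lies in the compact sublevel set $\Omega_1$, so a subsequence converges to some $x^\star\in\mathbb X$ with $\omega_1(x^\star)=0$, hence $x^\star\in\mathcal A$ and $\omega_2(x^\star)=0$, contradicting continuity of $\omega_2$. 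Adding this one observation closes the only gap; the rest of the envelope-smoothing is routine, as you say.
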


\subsection{$\omega$-Input-to-State Stability}
We consider the nonlinear, nonautonomous system on $\mathbb X$
\begin{align}
    \label{eq:system}
    x_{k+1} = f(x_k, u_k) \tag{$\Sigma$}
\end{align}
with $k \geq 0$, where $f: \mathbb X \times \mathbb R^m \to \mathbb X$ is continuous.

\begin{definition}
    \label{def:iss}
    Let $\omega$ be a measurement function on $\mathbb X$; the system \eqref{eq:system} is {\em $\omega$-input-to-state stable} if and only if there exists $\beta \in \mathcal {KL}$ and $\gamma \in \mathcal K$ such that for all $x_0 \in \mathbb X$ and $\{u_k\}_{k \geq 0} \in \ell_m^\infty$ the solution $\{x_k\}_{k \geq 0}$ satisfies
    \begin{align*}
        \omega( x_k ) \leq \max \{ \beta(\omega(x_0), k), \gamma(\|\{u_k\}\|_\infty) \}
    \end{align*}
    for all $k \geq 0$, where $\|\{u_k\}\|_\infty \Def= \sup_{k\geq0} |u_k|$.
\end{definition}

If $\omega$ is a size function, then Definition~\ref{def:iss} reduces to the classical definition of input-to-state stability (for $\mathcal A$ on $\mathbb X$) as introduced by \citet{Sontag1995b}. However, it also generalizes notions such as state-independent input-to-output stability \citep{Kellett2012}.
As for classical input-to-state stability, system \eqref{eq:system} is $\omega$-input-to-state stable if (and only if) there exists a continuous function $V: \mathbb X \to \mathbb R_{\geq 0}$, called $\omega$ISS Lyapunov function, and gains $\alpha_1, \alpha_2, \alpha_3 \in \mathcal K_\infty$ and $\gamma \in \mathcal K$ 
satisfying $\alpha_3 < \operatorname {id}$ and
\begin{subequations}
\begin{gather}
    \alpha_1(\omega(x)) \leq V(x) \leq \alpha_2(\omega(x)) \\
    V(f(x,u)) \leq \max \{ \alpha_3(V(x)), \gamma(|u|) \}
\end{gather}
\end{subequations}
for all $x \in \mathbb X$ and $u \in \mathbb R^m$ \citep[Theorem~7]{Noroozi2018}. 

\begin{remark}
    If $\omega$ is a size function, the second condition for an $\omega$ISS Lyapunov function can equivalently be written as an implication
    \begin{align*}
        \omega(x) \geq \chi(|u|) \Rightarrow V(f(x, u)) - V(x) \leq -\rho(V(x))
    \end{align*}
    where $\rho$ is a positive definite function and $\chi \in \mathcal K$.
    In particular, the ISS gain is
    \begin{align*}
        \gamma(r) = \max\{ V(f(x,u)) \, | \, V(x) \leq \chi(r), |u| \leq r \}
    \end{align*}
for all $r \geq 0$.
\end{remark}

We now consider an interconnection of $\wp \in \mathbb N$ nonlinear systems
\begin{align}
    \label{eq:subsystem}
    x_{i(k+1)} = f_i(x_{1k}, \ldots, x_{Rk}, u_{ik}) \tag{$\Sigma_i$}
\end{align}
with $x = (x_1, \ldots, x_\wp) \in \mathbb X$ and $u_i \in \mathbb R^{m_i}$, where $f_i: \mathbb X \times \mathbb R^{m_i} \to \mathbb X_i$ are continuous functions, for all $i \in I = \{1, \ldots, \wp\}$. Let $\| \cdot \|$ be a monotonic norm on $\mathbb R^\wp$.

\begin{theorem}[\citealp{Noroozi2018}, Theorem~10]
    \label{thm:smallgain}
    Suppose, \\ for any $i \in I$,
    that $\omega_i$ is a measurement function on $\mathbb X_i$ and the subsystem \eqref{eq:subsystem} 
    has an $\omega_i$ISS Lyapunov function $W_i: \mathbb X_i \to \mathbb R_{\geq 0}$ and gains $\gamma_{i1}^x, \ldots, \gamma_{i\wp}^x, \gamma_i^u \in \mathcal K \cup \{0\}$, such that $\gamma_{ii}^x < \operatorname{id}$ and
    \begin{align*}
        W_i(f_i(x, u_i)) \leq \max_{j \in I} \{ \gamma_{ij}^x(W_j(x_j)), \gamma_i^u(| u_i |) \}
    \end{align*}
    for all $(x, u_i) \in \mathbb X \times R^{m_i}$;
    define
    \begin{align*}
        \hat \omega: (x_1, \ldots, x_\wp) \mapsto \| (\omega_1(x_1), \ldots, \omega_\wp(x_\wp)) \|
    \end{align*}
    if $\gamma_{i_1}^x \circ \cdots \circ \gamma_{i_r}^x < \operatorname{id}$ for any $r \in \mathbb N$ and $\{i_1, \ldots, i_r\} \subseteq I$, then the interconnection $(\Sigma_1, \ldots, \Sigma_\wp)$ is $\hat \omega$-input-to-state stable.
    $\lhd$
\end{theorem}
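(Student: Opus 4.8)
The plan is to reduce the interconnection to a single system by assembling the subsystem functions $W_i$ into one aggregate $\hat\omega$ISS Lyapunov function, and then to invoke the $\omega$ISS Lyapunov characterization \citep[Theorem~7]{Noroozi2018} recalled above. To this end I would first encode the hypotheses in the monotone \emph{gain operator} $\Gamma: \mathbb R_{\geq 0}^\wp \to \mathbb R_{\geq 0}^\wp$,
\[
    \Gamma(s)_i = \max_{j \in I} \gamma_{ij}^x(s_j), \qquad i \in I,
\]
which is continuous and nondecreasing in each argument since $\gamma_{ij}^x \in \mathcal K \cup \{0\}$. The standing hypothesis that every cyclic composition of the off-diagonal gains is strictly below $\operatorname{id}$ is precisely the cyclic small-gain condition for $\Gamma$.

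The technical heart of the argument is to extract from this small-gain condition an \emph{$\Omega$-path} (path of strict decay): a map $\sigma = (\sigma_1, \ldots, \sigma_\wp)$ with each $\sigma_i \in \mathcal K_\infty$ such that $\Gamma(\sigma(r)) < \sigma(r)$ holds componentwise for every $r > 0$. I would obtain $\sigma$ from the theory of monotone aggregation operators, in which the cyclic small-gain condition is known to be equivalent to the existence of such a path. I expect this to be the main obstacle: the construction is not a routine estimate but a fixed-point/combinatorial argument over the finitely many cycles of the gain graph, and some care is required to guarantee that the resulting $\sigma_i$ are genuine class-$\mathcal K_\infty$ functions rather than merely monotone, so that each $\sigma_i^{-1}$ is well defined and of class $\mathcal K_\infty$.

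Given the path, I would set
\[
    V(x) = \max_{i \in I} \sigma_i^{-1}\big( W_i(x_i) \big)
\]
and verify the two defining properties of an $\hat\omega$ISS Lyapunov function. For the sandwich bound I would combine the subsystem estimates $\alpha_{i1}(\omega_i(x_i)) \le W_i(x_i) \le \alpha_{i2}(\omega_i(x_i))$ with the equivalence of the monotonic norm $\|\cdot\|$ to the max-norm on $\mathbb R^\wp$; this bounds $V$ above and below by $\mathcal K_\infty$ functions of $\max_i \omega_i(x_i)$, hence of $\hat\omega(x)$. For the decrease, I would fix $(x, u)$, set $r = V(x)$ so that $W_j(x_j) \le \sigma_j(r)$ for all $j$, and use monotonicity of $\Gamma$ together with the assumed dissipation inequality to obtain, for each $i$,
\[
    W_i(f_i(x, u_i)) \le \max\big\{ \Gamma(\sigma(r))_i,\ \gamma_i^u(|u_i|) \big\}.
\]
Wherever the input terms are dominated, $\Gamma(\sigma(r))_i < \sigma_i(r)$ gives $\sigma_i^{-1}(W_i(f_i(x,u_i))) < r$; taking the maximum over $i$ yields $V(f(x,u)) \le \alpha_3(V(x))$ with $\alpha_3(r) = \max_i \sigma_i^{-1}(\Gamma(\sigma(r))_i) < \operatorname{id}$, while the complementary case contributes an input gain $\gamma \in \mathcal K$. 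Assembling both cases into $V(f(x,u)) \le \max\{\alpha_3(V(x)), \gamma(|u|)\}$ and applying \citep[Theorem~7]{Noroozi2018} then establishes $\hat\omega$-input-to-state stability of the interconnection.
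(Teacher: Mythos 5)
This theorem is imported verbatim from \citet[Theorem~10]{Noroozi2018} and the paper offers no proof of its own (the trailing $\lhd$ marks it as a quoted result), so the only meaningful comparison is against the cited source's argument. Your sketch follows essentially that same canonical route --- encode the gains in a max-type monotone operator $\Gamma$, invoke the equivalence of the cyclic small-gain condition with the existence of an $\Omega$-path $\sigma$, and aggregate the subsystem functions into $V(x) = \max_{i} \sigma_i^{-1}(W_i(x_i))$ verified via the Lyapunov characterization of $\omega$ISS --- so your proposal is correct and matches the approach underlying the cited proof.
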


\subsection{Sensitivity of Optimal Control}
Take any $p \in P$.
The minimizer $\bar {\mathbf u}(p)$ is the unique solution of the generalized equation \citep{Dontchev2021}
\begin{align}
    \mathcal L_p(\mathbf u) \Def= \nabla_{\mathbf u} J(\mathbf u, p) + \partial U(\mathbf u) \ni 0, \quad \mathbf u \in \mathcal U,
\end{align}
where $\partial U(\mathbf u)$ is the subdifferential of $U$ at $\mathbf u$.
The inverse of \added[id=TC]{the set-valued mapping} $\mathcal L_p$ is
\begin{align*}
    \mathcal L_p^{-1}: \delta \mapsto \{ \mathbf u \in \mathcal U \, | \, \delta \in \mathcal L_p(\mathbf u) \}
\end{align*}
By strong convexity, $\mathbf u_\delta \in \mathcal L_p^{-1}(\delta)$ if and only if $\mathbf u_\delta$ is the (unique) solution of the perturbed optimization problem
\begin{align*}
    \min_{\mathbf u} J(\mathbf u, p) + U(\mathbf u) - \delta^\top \mathbf u
\end{align*}
and the mapping $\delta \mapsto \mathbf u_\delta$ is Lipschitz continuous \citep[Theorem~11.1]{Dontchev2021}. In other words, $\mathcal L_p$ is strongly regular\footnote{A set-valued map $L: \xi \rightrightarrows \zeta$ is said to be strongly regular at $\xi_0$ for $\zeta_0 \in L(\xi_0)$ if and only if $L^{-1}(\zeta)$ has a single value $\xi$ close to $\xi_0$ for all $\zeta$ around $\zeta_0$.} at $\bar {\mathbf u}(p)$ for 0 and hence, the solution map $\bar {\mathbf u}(\cdot)$ is Lipschitz continuous around $p$ \citep[Theorem~8.5]{Dontchev2021}.
Moreover, we have that
\begin{align}
    \label{eq:gradient-optimal}
    \nabla_p \bar J(p) = \left. \nabla_p J(\mathbf u, p) \right|_{\mathbf u = \bar {\mathbf u}(p)}
\end{align}
by virtue of \citet[Theorem~3.1]{Toan2021}.


\section{Stability Analysis}
We study the dynamics of an interconnection of finite-step proximal gradient descent algorithms and derive sufficient conditions for \eqref{eq:dynopt} to be asymptotically stable with respect to $\mathcal P_\star$.
To that extend, we show that both the outer and inner optimization are $\omega$-input-to-state stable with respect to appropriate measurement functions and prove that, assuming the sufficient condition of Theorem~\ref{thm:smallgain} is satisfied, $\hat \omega$ is a size function for $\mathcal P_\star$. Moreover, we will argue that the small-gain condition can always be met if the number of inner iterations $\kappa$ is sufficiently large.

Recall that, \added[id=TC]{under moderate assumptions for the} cost function, the proximal gradient descent algorithm asymptotically converges to a fixed point if the step size is chosen to be smaller than or equal to the inverse of the Lipschitz constant of the gradient.

\begin{assumption}
    \label{ass:steplength}
    The gradients $\nabla_p \bar J(\cdot)$ and $\nabla_{\mathbf u} J(\cdot, p)$ are Lipschitz continuous (for all $p \in \mathcal P$) with constants $\nu^{-1}$ and $\mu^{-1}$, respectively, or larger.
\end{assumption}

Since $\nabla_p J(\mathbf u, \cdot)$ is smooth, $\bar {\mathbf u}(\cdot)$ is locally Lipschitz, and $\mathcal P$ is compact, as well as $\nabla_{\mathbf u} J(\cdot, p)$ being linear in $\mathbf u$, Lipschitz continuity of the gradients follows a fortiori.
Under Assumption~\ref{ass:steplength}, the solution $\{ \mathbf u_k \}_{k \geq 0}$ of \eqref{eq:dynopt-inner} for a fixed $p \in \mathcal P$ converges to $\bar {\mathbf u}(p)$ if $k \to \infty$. 

We propose to estimate the gradient of $\bar J$ by the partial derivative
\begin{align}
    \label{eq:gradient-estimate}
    \widehat {\nabla_p J}(\mathbf u, p) = \nabla_p J(\mathbf u, p)
\end{align}
The interconnection now deviates from the nominal in two points: The parameter of the inner iteration changes during the optimization and the gradient for the outer iteration is perturbed by the error between $\mathbf u_\kappa^\ell$ and $\bar {\mathbf u}(p^\ell)$.

\subsection{Parametrized Gradient Descent}
Input-to-state stability of parametrized optimization algorithms has previously been studied, e.g., by \cite{Liao-McPherson2020a}. That work considered the change of parameter and optimization error for input and state, respectively. We are going to generalize the result to obtain $\omega$-input-to-state stability for the proximal gradient descent. We start by rewriting the $\kappa$-step update by \eqref{eq:dynopt-inner} to
\begin{subequations}
    \label{eq:pgd-parametrized}
\begin{align}
    \mathbf u^{\ell+1} &= g_\kappa(\mathbf u^\ell, \tilde p_\ell, \Delta p_\ell) \\
    \tilde p_{\ell+1} &= \tilde p_\ell + \Delta p_\ell
\end{align}
\end{subequations}
where $g_\kappa: \mathbb R^{Nm} \times \mathbb R^l \times \mathbb R^l \to \mathbb R^{Nm}$ is obtained by repeating the proximal gradient descent \eqref{eq:dynopt-inner} for $\kappa$ times with fixed parameter $p^\ell = \tilde p_\ell + \Delta p_\ell$. Here, the additional state $\tilde p \in \mathbb R^l$ represents the previous parameter, that is, $\tilde p_\ell = p_{\ell-1}$ for all $\ell > 0$. We introduce the measurement function
\begin{align*}
    \omega_{\mathbf u}: (\mathbf u, \tilde p) \mapsto \| \mathbf u - \bar {\mathbf u}(\tilde p) \|_2
\end{align*}
for the augmented state $(\mathbf u, \tilde p)$ of \eqref{eq:pgd-parametrized}.
Since the proximity operator is nonexpansive (Property~\ref{prop:prox-nonexp}), any solution $\{\mathbf u^\ell_k\}_{k \geq 0}$ of \eqref{eq:dynopt-inner} satisfies the contraction property
\begin{align}
    \label{eq:pgd-contraction}
    \omega_{\mathbf u}(\mathbf u^\ell_k, p^\ell) \leq \eta^k \omega_{\mathbf u}(\mathbf u^\ell_0, p^\ell)
\end{align}
for any $k \geq 0$, where the convergence rate $\eta \in (0, 1)$ follows from strong convexity of $J(\cdot, p)$ and Assumption~\ref{ass:steplength}.
We obtain the following result.

\begin{lemma}
    \label{lem:dissipation-inner}
    Let $\mathbf u \in \mathbb R^{Nm}$ and $\tilde p, \Delta p \in \mathbb R^l$; the dynamics in \eqref{eq:pgd-parametrized} satisfy the dissipation-type inequality
    \begin{align*}
        \omega_{\mathbf u}(\mathbf u_+, \tilde p_+) &\leq \eta^\kappa \omega_{\mathbf u}(\mathbf u, \tilde p) + \lambda_\star \eta^\kappa | \Delta p|
    \end{align*}
    if $\mathbf u_+ = g_\kappa(\mathbf u, \tilde p, \Delta p)$ and $\tilde p_+ = \tilde p + \Delta p$, where $\lambda_\star \geq 0$ is the Lipschitz constant of the optimal solution $\bar {\mathbf u}(\cdot)$ on $\mathcal P$.
\end{lemma}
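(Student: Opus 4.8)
The goal is a one-step dissipation inequality for the measurement function $\omega_{\mathbf u}(\mathbf u, \tilde p) = \| \mathbf u - \bar{\mathbf u}(\tilde p) \|_2$ under the augmented dynamics. The plan is to decompose the error at the new state, $\mathbf u_+ - \bar{\mathbf u}(\tilde p_+)$, by inserting and subtracting the optimizer evaluated at the \emph{new} parameter $p^\ell = \tilde p + \Delta p = \tilde p_+$, since that is the parameter actually used inside $g_\kappa$. Writing
\begin{align*}
    \mathbf u_+ - \bar{\mathbf u}(\tilde p_+) = \big( \mathbf u_+ - \bar{\mathbf u}(p^\ell) \big) + \big( \bar{\mathbf u}(p^\ell) - \bar{\mathbf u}(\tilde p_+) \big),
\end{align*}
the second bracket vanishes because $p^\ell = \tilde p_+$, so in fact the relevant split is against $\bar{\mathbf u}(\tilde p) = \bar{\mathbf u}(p_{\ell-1})$, the optimizer at the \emph{old} parameter, against which $\omega_{\mathbf u}(\mathbf u, \tilde p)$ is measured. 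So I would instead write $\mathbf u_+ - \bar{\mathbf u}(p^\ell)$ as the quantity controlled by the $\kappa$-step contraction and then relate $\bar{\mathbf u}(p^\ell)$ to $\bar{\mathbf u}(\tilde p)$ through Lipschitz continuity of the solution map.

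Concretely, first I apply the contraction property \eqref{eq:pgd-contraction} with $k = \kappa$ to the inner iteration run at the fixed parameter $p^\ell$: since $g_\kappa$ is exactly $\kappa$ steps of \eqref{eq:dynopt-inner} started from $\mathbf u^\ell_0 = \mathbf u$, we get
\begin{align*}
    \| \mathbf u_+ - \bar{\mathbf u}(p^\ell) \|_2 \leq \eta^\kappa \, \| \mathbf u - \bar{\mathbf u}(p^\ell) \|_2 .
\end{align*}
Second, I use the triangle inequality on the right-hand side to reintroduce $\bar{\mathbf u}(\tilde p)$,
\begin{align*}
    \| \mathbf u - \bar{\mathbf u}(p^\ell) \|_2 \leq \| \mathbf u - \bar{\mathbf u}(\tilde p) \|_2 + \| \bar{\mathbf u}(\tilde p) - \bar{\mathbf u}(p^\ell) \|_2 ,
\end{align*}
and bound the last term by $\lambda_\star | p^\ell - \tilde p | = \lambda_\star | \Delta p |$ using the global Lipschitz constant $\lambda_\star$ of $\bar{\mathbf u}(\cdot)$ on the compact set $\mathcal P$ (established in the sensitivity discussion via strong regularity). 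Third, since $\tilde p_+ = p^\ell$, the left-hand side $\omega_{\mathbf u}(\mathbf u_+, \tilde p_+) = \| \mathbf u_+ - \bar{\mathbf u}(\tilde p_+) \|_2$ equals $\| \mathbf u_+ - \bar{\mathbf u}(p^\ell) \|_2$, so chaining the three bounds yields
\begin{align*}
    \omega_{\mathbf u}(\mathbf u_+, \tilde p_+) \leq \eta^\kappa \omega_{\mathbf u}(\mathbf u, \tilde p) + \lambda_\star \eta^\kappa | \Delta p | ,
\end{align*}
which is exactly the claimed inequality.

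The only subtlety — and the step I would watch most carefully — is keeping the bookkeeping of which parameter each optimizer is evaluated at straight, because $\omega_{\mathbf u}$ measures against the \emph{stored previous} parameter $\tilde p$, whereas the inner iteration contracts toward the optimizer at the \emph{current} parameter $\tilde p_+ = \tilde p + \Delta p$. The identification $\tilde p_+ = p^\ell$ is what makes the $\eta^\kappa$ multiply the Lipschitz term rather than appearing as a bare $\lambda_\star |\Delta p|$, and this is the feature that later makes the inner-loop ISS gain shrink with $\kappa$. I would also note explicitly that $\lambda_\star$ is a valid global constant on $\mathcal P$ (not merely local) because $\mathcal P$ is compact and the local Lipschitz property holds at every point, so the supremum of local constants is finite; no other obstacle is expected, as the contraction rate $\eta \in (0,1)$ and Lipschitz constant $\lambda_\star$ are both available from earlier results.
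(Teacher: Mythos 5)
Your proof is correct and uses exactly the same two ingredients as the paper's proof: the $\kappa$-step contraction \eqref{eq:pgd-contraction} toward $\bar{\mathbf u}(\tilde p_+)$ and the triangle inequality plus Lipschitz continuity of $\bar{\mathbf u}(\cdot)$ to switch the reference from $\tilde p_+$ back to $\tilde p$. The only difference is presentational -- the paper applies the triangle inequality to the initial error first and then contracts, while you contract first and then apply the triangle inequality -- so the arguments are essentially identical.
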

\begin{proof}
    From the triangle inequality, we obtain
    \begin{multline}
        \label{eq:pgd-triangle}
        \omega_{\mathbf u}(\mathbf u, \tilde p + \Delta p) = \| \mathbf u - \bar {\mathbf u}(\tilde p + \Delta p) \|_2 \\ 
        \leq \| \mathbf u - \bar {\mathbf u}(\tilde p) \|_2 + \| \bar {\mathbf u}(\tilde p + \Delta p) - \bar {\mathbf u}(\tilde p) \|_2 \\
        \leq \omega_{\mathbf u}(\mathbf u, \tilde p) + \lambda_\star | \Delta p |
    \end{multline}
    and combining \eqref{eq:pgd-contraction} and \eqref{eq:pgd-triangle} yields the desired result.
\end{proof}

It remains to prove that $\omega_{\mathbf u}$ is an $\omega_{\mathbf u}$ISS Lyapunov function.

\begin{proposition}
    \label{prop:iss-inner}
    The $\kappa$-step update of the proximal gradient descent, as given in \eqref{eq:pgd-parametrized}, is $\omega_{\mathbf u}$-input-to-state stable with gain $\gamma_\kappa$; and $\gamma_\kappa' \to 0$ uniformly as $\kappa \to \infty$.
\end{proposition}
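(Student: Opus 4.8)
The plan is to exhibit the measurement function $\omega_{\mathbf u}$ itself as an $\omega_{\mathbf u}$ISS Lyapunov function for \eqref{eq:pgd-parametrized}, with input $\Delta p$ and augmented state $(\mathbf u, \tilde p)$, and then invoke the Lyapunov characterization of \citet[Theorem~7]{Noroozi2018} quoted above. Setting $V = \omega_{\mathbf u}$, the sandwich bound $\alpha_1(\omega_{\mathbf u}(x)) \le V(x) \le \alpha_2(\omega_{\mathbf u}(x))$ holds trivially with $\alpha_1 = \alpha_2 = \operatorname{id} \in \mathcal K_\infty$, and $V$ is continuous because $g_\kappa$ and $\bar {\mathbf u}(\cdot)$ are continuous. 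The only substantive ingredient is the decrease condition, which I would extract directly from the dissipation-type inequality of Lemma~\ref{lem:dissipation-inner}, namely $V(\mathbf u_+, \tilde p_+) \le \eta^\kappa V(\mathbf u, \tilde p) + \lambda_\star \eta^\kappa | \Delta p |$.

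The main step is to recast this additive estimate into the max-form decrease that the characterization requires. Fix any $\theta \in (\eta^\kappa, 1)$ — a nonempty interval since $\eta \in (0,1)$ — and split on the relative size of the two summands. If $\lambda_\star \eta^\kappa | \Delta p | \le (\theta - \eta^\kappa) V$, the additive bound collapses to $V_+ \le \theta V$; otherwise $V < \tfrac{\lambda_\star \eta^\kappa}{\theta - \eta^\kappa} | \Delta p |$, and substituting this back into the dissipation inequality gives $V_+ \le \tfrac{\lambda_\star \theta \eta^\kappa}{\theta - \eta^\kappa} | \Delta p |$. Combining the two cases yields $V_+ \le \max\{ \alpha_3(V), \gamma_\kappa(| \Delta p |) \}$ with $\alpha_3 = \theta\,\operatorname{id} < \operatorname{id}$ and the linear gain $\gamma_\kappa(r) = \tfrac{\lambda_\star \theta \eta^\kappa}{\theta - \eta^\kappa}\, r \in \mathcal K \cup \{0\}$. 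This is exactly the hypothesis of \citet[Theorem~7]{Noroozi2018}, so $\omega_{\mathbf u}$-input-to-state stability follows.

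For the asymptotic claim I would freeze $\theta$ away from both $0$ and $1$ — for instance $\theta = \tfrac12$, which is admissible as soon as $\kappa$ is large enough that $\eta^\kappa < \tfrac12$ — so that the slope $c_\kappa = \tfrac{\lambda_\star \theta \eta^\kappa}{\theta - \eta^\kappa}$ of the (linear) gain satisfies $c_\kappa \to 0$ as $\kappa \to \infty$, since $\eta^\kappa \to 0$ while the denominator stays bounded away from $0$. Because $\gamma_\kappa$ is linear, its derivative $\gamma_\kappa' \equiv c_\kappa$ is constant in $r$, so $\gamma_\kappa' \to 0$ and the convergence is trivially uniform in $r$.

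The main obstacle is the additive-to-max conversion together with the tension it creates: the small-gain argument of Theorem~\ref{thm:smallgain} later needs the self-gain $\alpha_3 = \theta\,\operatorname{id}$ to remain strictly below $\operatorname{id}$, so $\theta$ must stay bounded away from $1$, yet pushing $\theta$ down toward $\eta^\kappa$ inflates the cross-gain $\gamma_\kappa$ through the factor $(\theta - \eta^\kappa)^{-1}$. The plan resolves this by keeping $\theta$ fixed and letting $\kappa$ rather than $\theta$ drive $\gamma_\kappa$ to zero; the only remaining care is to confirm that $\omega_{\mathbf u}$ qualifies as a measurement function on the augmented state space and that $g_\kappa$ maps into the correct domain, so that the characterization genuinely applies.
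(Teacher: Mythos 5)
Your proposal is correct and follows essentially the same route as the paper: it takes $V=\omega_{\mathbf u}$, invokes the dissipation inequality of Lemma~\ref{lem:dissipation-inner}, and performs the same two-case additive-to-max conversion (your parameter $\theta$ corresponds to the paper's $1-\epsilon$), yielding a linear gain whose slope vanishes as $\kappa\to\infty$. Your direct limit argument with $\theta$ frozen at $\tfrac12$ is in fact a slightly cleaner justification of $\gamma_\kappa'\to 0$ than the paper's appeal to l'H\^opital's rule, but the substance of the proof is the same.
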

\begin{proof}
    Let $\mathbf u_+ = g_\kappa(\mathbf u, \tilde p, \Delta p)$ and $\tilde p_+ = \tilde p + \Delta p$ for some $\mathbf u \in \mathbb R^{Nm}$ and $\tilde p, \Delta p \in \mathbb R^l$ and
    choose $\epsilon \in (0, 1)$; by virtue of Lemma~\ref{lem:dissipation-inner}, we have that
    \begin{align*}
        \omega_{\mathbf u}(\mathbf u_+, \tilde p_+) \leq (1 - \epsilon) \omega_{\mathbf u}(\mathbf u, \tilde p) \Def= \alpha(\omega_{\mathbf u}(\mathbf u, \tilde p))
    \end{align*}
    if $(1 - \eta^\kappa - \epsilon) \omega_{\mathbf u}(\mathbf u, \tilde p) \geq \lambda_\star {\eta^\kappa} | \Delta p |$; and
    \begin{align*}
        \omega_{\mathbf u}(\mathbf u_+, \tilde p_+) < \lambda_\star \frac{\eta^\kappa + 1}{1 - \eta^\kappa - \epsilon} \eta^\kappa | \Delta p | \Def= \gamma_\kappa(|\Delta p|)
    \end{align*}
    otherwise. If $\epsilon$ is sufficiently small, we have that $\alpha, \gamma_\kappa \in \mathcal K_\infty$ as well as
    \begin{align*}
        \omega_{\mathbf u}(\mathbf u_+, \tilde p_+) \leq \max \{ \alpha(\omega_{\mathbf u}(\mathbf u, \tilde p)), \gamma_\kappa(|\Delta p|) \}
    \end{align*}
    proving that $\omega_{\mathbf u}$ is an $\omega_{\mathbf u}$ISS Lyapunov function. Moreover, $\gamma_\kappa$ is linear with $\gamma_\kappa' \to 0$ as $\kappa \to \infty$ by l'Hôpital's rule.
\end{proof}

In other words, \eqref{eq:pgd-parametrized} is input-to-output stable, with input $\Delta p$ and output $\mathbf u$, independently of the state $\tilde p$ -- that is, the actual parameter $p$. 

\begin{remark}
    $\omega_{\mathbf u}$-input-to-state stability of \eqref{eq:pgd-parametrized} can also be viewed as {\em parametrized} input-to-state stability of the original proximal gradient descent algorithm. In contrast, \citet{Liao-McPherson2020a} proved input-to-state stability with both $p$ and $\Delta p$ as inputs (although the gain on $p$ was zero).
\end{remark}

\subsection{Perturbed Gradient Descent}
In recent work, \citet{Sontag2022} established input-to-state stability of a perturbed steepest descent algorithm with respect to the set of minimizers $\mathcal P_\star$, under the assumptions that \added[id=IK]{the cost function and norm of the gradient} are size functions for $\mathcal P_\star$. Convergence of proximal gradient schemes under perturbed gradients was studied in earlier works \citep{Lemaire1988, Tossings1994} and more recently by \citet{Atchade2017}.

Building upon those previous works, we prove input-to-state stability of the proximal gradient descent scheme with perturbed gradient, as given in \eqref{eq:dynopt-outer}. We start with the simplified iteration
\begin{align}
    \label{eq:pgd-perturbation}
    p^{\ell+1} = T_\nu(p^\ell, d) \Def= \operatorname {prox}_P \big[ p^\ell - \nu (\nabla \bar J(p^\ell)^\top + d) \big]
\end{align}
where $d \in \mathbb R^l$ is a perturbation for the gradient. Adopting from \citet{Sontag2022}, we make the following assumptions.

\begin{assumption}
    \label{ass:iss-sufficient}
    The cost function $\bar J$ satisfies:
    \begin{itemize}
        \item the function $J_\star: p \mapsto (\bar J + P)(p) - j^\star$ is a size function for $\mathcal P_\star$, where $j^\star$ is the global minimum of $\bar J + P$; 
        \item the function $\Lambda: p \mapsto |T_\nu(p, 0) - p|$ is a size function for $\mathcal P_\star$.
    \end{itemize}
\end{assumption}

\added[id=TC]{%
Since $P$ has a compact domain, $J_\star$ is a size function if and only if $\bar J + P$ is positive definite with respect to $\mathcal P_\star$. Moreover, the assumption on $\Lambda$ is equivalent to the necessary conditions for optimality being sufficient, too.
}
If $P \equiv 0$, as for gradient descent, $\Lambda(p)$ corresponds to $|\nabla \bar J(p)|$ for all $p \in \mathbb R^l$.

\begin{theorem}
    \label{thm:iss-outer}
    Let $\omega_p: \mathbb R^l \to \mathbb R_{\geq 0}$ be a size function for $\mathcal P_\star$; the dynamics in \eqref{eq:pgd-perturbation} are $\omega_p$-input-to-state stable.
\end{theorem}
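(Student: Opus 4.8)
The plan is to exhibit the shifted cost $V = J_\star = (\bar J + P) - j^\star$ as an $\omega_p$ISS Lyapunov function for \eqref{eq:pgd-perturbation} and then invoke the Lyapunov characterization of $\omega$-input-to-state stability. Since $\omega_p$ and $J_\star$ are both size functions for $\mathcal P_\star$ (the latter by Assumption~\ref{ass:iss-sufficient}), Lemma~\ref{lem:size-comparison} immediately supplies $\alpha_1,\alpha_2\in\mathcal K_\infty$ with $\alpha_1(\omega_p(p)) \le V(p) \le \alpha_2(\omega_p(p))$, so the sandwich bound required of a Lyapunov function holds for free. It then remains only to establish a decrease condition for $V$ along \eqref{eq:pgd-perturbation}.

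First I would record the unperturbed descent. Writing $p_+^0 = T_\nu(p,0)$, the standard proximal-gradient descent lemma -- valid because Assumption~\ref{ass:steplength} makes $\nu$ no larger than the reciprocal Lipschitz constant of $\nabla_p\bar J$ -- gives $V(p_+^0) - V(p) \le -\tfrac{1}{2\nu}\Lambda(p)^2$, with $\Lambda(p) = |T_\nu(p,0)-p|$. Because $\Lambda$, and hence $\tfrac{1}{2\nu}\Lambda^2$, is a size function for $\mathcal P_\star$ by Assumption~\ref{ass:iss-sufficient}, Lemma~\ref{lem:size-comparison} yields $\rho\in\mathcal K_\infty$ with $\tfrac{1}{2\nu}\Lambda(p)^2 \ge \rho(V(p))$, so the unperturbed step satisfies $V(p_+^0) - V(p) \le -\rho(V(p))$.

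Next I would propagate the perturbation. Writing $p_+^d = T_\nu(p,d)$, nonexpansiveness of the proximity operator (Property~\ref{prop:prox-nonexp}) gives $|p_+^d - p_+^0| \le \nu|d|$, since the two prox arguments differ by $\nu d$. Both $p_+^d$ and $p_+^0$ lie in the compact domain $\mathcal P = \operatorname{dom}P$, being outputs of $\operatorname{prox}_P$, and on $\mathcal P$ the size function $V$ is continuous, hence uniformly continuous; majorizing its modulus of continuity by some $\hat\sigma\in\mathcal K_\infty$ gives $V(p_+^d) - V(p_+^0) \le \hat\sigma(\nu|d|)$. Combining, $V(p_+^d) - V(p) \le -\rho(V(p)) + \hat\sigma(\nu|d|)$. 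A routine gain-margin split then produces the implication form of the Lyapunov decrease condition (the remark following the Lyapunov characterization): whenever $\rho(V(p)) \ge 2\hat\sigma(\nu|d|)$ -- equivalently $\omega_p(p) \ge \chi(|d|)$ for a suitable $\chi\in\mathcal K$ assembled from $\alpha_1^{-1}$, $\rho^{-1}$ and $\hat\sigma$ -- one has $V(p_+^d) - V(p) \le -\tfrac{1}{2}\rho(V(p))$. This certifies $V$ as an $\omega_p$ISS Lyapunov function, and the cited Theorem~7 of \citet{Noroozi2018} then delivers the claimed $\omega_p$-input-to-state stability.

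The main obstacle is the perturbation step: because $P$ is merely proper convex and lower semicontinuous, $V$ need not be globally Lipschitz, so I cannot bound $V(p_+^d)-V(p_+^0)$ by a constant multiple of $|d|$. The device that rescues the argument is that every prox step lands in the compact set $\mathcal P$, where continuity upgrades to uniform continuity; this trades the missing Lipschitz bound for a $\mathcal K_\infty$ modulus and still yields a bona fide $\mathcal K$ gain. A secondary point worth stating carefully is the comparison $\tfrac{1}{2\nu}\Lambda^2 \ge \rho(V)$: it is precisely here that the second clause of Assumption~\ref{ass:iss-sufficient} -- that $\Lambda$ is a size function, i.e.\ that stationarity implies optimality -- is indispensable, since otherwise the decrease could vanish at a non-minimizing stationary point.
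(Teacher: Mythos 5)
Your proof is correct and certifies the same Lyapunov function $J_\star$ as the paper, but it reaches the decrease condition by a genuinely different route. The paper applies Lemma~\ref{lem:prox-value} directly to the \emph{perturbed} step $T_\nu(p,d)$, so the perturbation enters through the cross term $\langle T_\nu(p,d)-p,\,\nu d\rangle$; a triangle inequality then converts $|T_\nu(p,d)-p|$ into $\Lambda(p)\pm\nu|d|$, and a \emph{linear} comparison $b_1 J_\star \le \Lambda \le b_2 J_\star$ (asserted ``without loss of generality'' by compactness of $\mathcal P$) produces a quadratic decrease and, after the gain-margin split, a max-form bound with constant gains. You instead apply the descent lemma only to the nominal step $T_\nu(p,0)$, transport the perturbation by nonexpansiveness, $|T_\nu(p,d)-T_\nu(p,0)|\le\nu|d|$ (this is exactly Lemma~\ref{lem:prox-perturbation}), and absorb it through uniform continuity of $J_\star$ on the compact set $\mathcal P$, keeping all comparison functions nonlinear. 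Each choice buys something. Your route is more careful on a point where the paper is loose: linear comparability of two size functions does \emph{not} follow from compactness alone --- nothing in Assumption~\ref{ass:iss-sufficient} excludes $\Lambda$ behaving like $J_\star^2$ near $\mathcal P_\star$, which defeats the lower bound $b_1 J_\star\le\Lambda$ for any constant $b_1>0$ --- whereas your $\mathcal K_\infty$-comparison is exactly what Lemma~\ref{lem:size-comparison} licenses, and your modulus-of-continuity device also covers the case where $J_\star$ fails to be Lipschitz on $\mathcal P$. The price is that your ISS gain $\chi$ (built from $\alpha_1^{-1}$, $\rho^{-1}$, $\hat\sigma$) is nonlinear, while the paper's proof deliberately extracts constants $\alpha_0\in(0,1)$, $\gamma_0>0$ that are reused verbatim in \eqref{eq:pgd-gains} and in the small-gain products $\gamma_2\gamma_\kappa<1$ and $\gamma_0\gamma_1\gamma_\kappa<1$ of the interconnection proposition; with your version, that downstream argument would need to be redone with compositions of nonlinear gains against $\operatorname{id}$ as in Theorem~\ref{thm:smallgain}, rather than products of constants. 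One shared caveat worth stating explicitly in your write-up: for $p\notin\mathcal P$ one has $J_\star(p)=\infty$, so your difference/implication form degenerates to $\infty-\infty$; either restrict to $p\in\mathcal P$ (legitimate, since $T_\nu$ maps into $\mathcal P$ after one step) or state the final estimate in max form, which is vacuously true there --- the paper's formulation silently benefits from the latter.
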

\begin{proof}
    \begin{subequations}
    Take $p, d \in \mathbb R^l$; since $T_\nu(p, d) \in \mathcal P$, we have that
    \begin{multline}
        \label{eq:pgd-proxvalue-1}
        (\bar J + P)(T_\nu(p, d)) - (\bar J + P)(p) \\
        \leq -(2\nu)^{-1} | T_\nu(p, d) - p |^2 -\alpha^{-1} \langle T_\nu(p, d) - p, -\nu d \rangle
    \end{multline}
    by Lemma~\ref{lem:prox-value}. Rewriting $T_\nu(p,d) - p$ and applying the triangle inequality to both sides we obtain
    \begin{align*}
        \big| \Lambda(p) - \nu |d| \big|
        \leq | T_\nu(p,d) - p |
        \leq \Lambda(p) + \nu |d|
    \end{align*}
    and thus, \eqref{eq:pgd-proxvalue-1} yields
    \begin{align}
        \label{eq:pgd-proxvalue-2}
        J_\star(T_\nu(p,d)) - J_\star(p) \leq -(2\nu)^{-1} \Lambda(p)^2 + 2 \Lambda(p) |d| + \nu |d|^2
    \end{align}
    where rewriting the left-hand side is trivial. We now argue with \citet[Proof of Theorem~4]{Sontag2022} that, since $J_\star$ and $\Lambda$ both are size functions for $\mathcal P_\star$, there exists $\mathcal K_\infty$ functions $b_1$ and $b_2$ satisfying
    \begin{align*}
        b_1 J_\star(p) \leq \Lambda(p) \leq b_2 J_\star(p)
    \end{align*}
    for all $p \in \mathbb R^l$, where linearity of $b_1$ and $b_2$ can be assumed without loss of generality for $\mathcal P$ is compact. 
    
    Inserting the inequalities into \eqref{eq:pgd-proxvalue-2}, we have that, for all $p,d \in \mathbb R^l$,
    \begin{align}
        J_\star(T_\nu(p,d)) - J_\star(p) \leq -c J_\star(p)^2 + 2 b_2 J_\star(p) |d| + \nu |d|^2
    \end{align}
    for some $c > 0$ satisfying $c \leq b_1^2 (2\nu)^{-1}$. Choose $\chi > 0$ such that, for all $p,d \in \mathbb R^l$,
    \begin{align*}
        J_\star(T_\nu(p,d)) \leq J_\star(p) -\rho J_\star(p)^2
    \end{align*}
    if $J_\star(p) \geq \chi |d|$, where $\rho > 0$; and otherwise,
    \begin{align*}
        J_\star(T_\nu(p,d)) \leq \chi |d| + \chi_0 |d|^2
    \end{align*}
    with $\chi_0 > 0$. 
    \end{subequations}
    Hence, as $J_\star \circ T_\nu$ is upper-bounded, there exists $\alpha \in (0, 1)$ and $\gamma > 0$ satisfying
    \begin{align*}
        J_\star(T_\nu(p,d)) \leq \max\{ \alpha J_\star(p), \gamma |d| \}
    \end{align*}
    for all $p, d \in \mathbb R^l$, proving that $J_\star$ is an $\omega_p$ISS-Lyapunov function for \eqref{eq:pgd-perturbation}. This concludes the proof.
\end{proof}

It rests to prove input-to-state stability if the gradient perturbation is caused by the estimation error of \eqref{eq:gradient-estimate}. From \eqref{eq:gradient-optimal}, we have that the $i$-th component of the gradient error vector satisfies
\begin{multline}
    \big| \big[\nabla_p \bar J - \widehat {\nabla_p J}\big]_i (p, \mathbf u) \big| \leq (\bar {\mathbf u}(p) - \mathbf u)^\top \nabla_{p_i} H(p) (\bar {\mathbf u}(p) - \mathbf u) \\
    + |\partial_{p_i} g(p)^\top (\bar {\mathbf u}(p) - \mathbf u)|
\end{multline}
where $\nabla_{p_i}$ denotes the element-wise partial derivative with respect to the $i$-th component of $p \in \mathbb R^l$. Since $\mathcal U$ is compact, there exists $\digamma > 0$ such that
\begin{align*}
    \big| \big[\nabla_p \bar J - \widehat {\nabla_p J}\big] (p, \mathbf u) \big| \leq \digamma | \bar {\mathbf u}(p) - \mathbf u |
\end{align*}
for all $(p, \mathbf u) \in \mathcal P \times \mathcal U$. We conclude with the following result.

\begin{corollary}
    The dynamics in \eqref{eq:dynopt-outer} are $\omega_p$-input-to-state stable with respect to the input $\bar {\mathbf u}(p) - \mathbf u$ for any size function $\omega_p$ for $\mathcal P_\star$ on $\mathbb R^l$.
    $\lhd$
\end{corollary}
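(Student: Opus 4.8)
The plan is to recognise the outer iteration \eqref{eq:dynopt-outer} as a particular realisation of the perturbed proximal gradient scheme \eqref{eq:pgd-perturbation} and then to combine Theorem~\ref{thm:iss-outer} with the gradient-error bound derived immediately above. First I would set the perturbation equal to the gradient estimation error,
\[
    d = \widehat{\nabla_p J}(\mathbf u, p)^\top - \nabla_p \bar J(p)^\top,
\]
so that the update $p^{\ell+1} = \operatorname{prox}_P\big[p^\ell - \nu\, \widehat{\nabla_p J}(\mathbf u^{\ell+1}, p^\ell)^\top\big]$ coincides with $p^{\ell+1} = T_\nu(p^\ell, d_\ell)$. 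Theorem~\ref{thm:iss-outer} already establishes that $J_\star$ is an $\omega_p$ISS Lyapunov function for \eqref{eq:pgd-perturbation}, for any size function $\omega_p$, through the dissipation inequality
\[
    J_\star(T_\nu(p, d)) \leq \max\{\alpha J_\star(p),\, \gamma |d|\}
\]
with $\alpha \in (0,1)$ and $\gamma > 0$.

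Next I would insert the gradient-error estimate $|d| \leq \digamma\, |\bar{\mathbf u}(p) - \mathbf u|$. This bound holds on $\mathcal P \times \mathcal U$, and since the outer proximity operator maps into $\mathcal P$ while the inner iterates remain in $\mathcal U$ -- both domains being compact -- the estimate applies along every trajectory of \eqref{eq:dynopt-outer}. Writing $w = \bar{\mathbf u}(p) - \mathbf u$ for the input and substituting into the dissipation inequality yields
\[
    J_\star(T_\nu(p, d)) \leq \max\{\alpha J_\star(p),\, \gamma \digamma\, |w|\},
\]
which is exactly the $\omega_p$ISS Lyapunov condition for the outer dynamics with input $w$ and gain $r \mapsto \gamma \digamma\, r \in \mathcal K_\infty$. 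Invoking the Lyapunov characterisation of $\omega$ISS \citep[Theorem~7]{Noroozi2018} then gives $\omega_p$-input-to-state stability; the statement holds for an arbitrary size function $\omega_p$ because $J_\star$ is itself a size function for $\mathcal P_\star$ and any two size functions are comparable by Lemma~\ref{lem:size-comparison}, so $J_\star$ serves as an $\omega_p$ISS Lyapunov function in each case.

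Since Theorem~\ref{thm:iss-outer} already carries the analytic weight, I do not expect a substantial obstacle here. The two points that require care are verifying that the iterates remain within the compact sets $\mathcal P$ and $\mathcal U$ on which the gradient-error bound was established, and justifying the treatment of the state-dependent quantity $\bar{\mathbf u}(p) - \mathbf u$ as an exogenous input. The latter is legitimate because the dissipation inequality holds pointwise for every admissible pair $(p, \mathbf u)$, so the corresponding estimate along trajectories is precisely the ISS bound needed for the subsequent small-gain analysis. The composition of gains is immediate, as both $\gamma$ and the error constant $\digamma$ are linear and their product remains a class-$\mathcal K_\infty$ gain.
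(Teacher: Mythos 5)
Your proposal is correct and matches the paper's intended argument: the corollary is stated without proof precisely because it follows, as you show, by identifying \eqref{eq:dynopt-outer} with $T_\nu(p,d)$ for $d$ equal to the gradient estimation error, bounding $|d| \leq \digamma\,|\bar{\mathbf u}(p) - \mathbf u|$ on the compact domains, and absorbing $\digamma$ into the ISS gain of Theorem~\ref{thm:iss-outer}. Your added care about the iterates remaining in $\mathcal P \times \mathcal U$ and the comparability of size functions via Lemma~\ref{lem:size-comparison} simply makes explicit what the paper leaves implicit.
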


\subsection{Interconnected Gradient Descent}
We are going to prove asymptotic stability of the bilevel proximal gradient scheme by application of the small-gain theorem. 


Suppose $\{(p^\ell, \mathbf u^\ell)\}_{\ell \geq 0} \subset P \times \mathcal U$ is a solution to the interconnection of \eqref{eq:dynopt} and \eqref{eq:dynopt-inner}; for any $\ell > 0$, take $\tilde p_{\ell+1} = p^\ell$ and $\Delta p_\ell = p^\ell - p^{\ell-1}$ and recall that
\begin{subequations}
    \label{eq:pgd-gains}
\begin{align}
    \omega_{\mathbf u}(\mathbf u^{\ell+1}, \tilde p_{\ell+1}) &\leq \max\{ \alpha_\kappa \omega_{\mathbf u}(\mathbf u^\ell, \tilde p^\ell), \gamma_\kappa | \Delta p^\ell | \} \\
    J_\star(\tilde p^{\ell+1}) &\leq \max\{ \alpha_0 J_\star(\tilde p^\ell), \gamma_0 \digamma \omega_{\mathbf u}(\mathbf u^\ell, \tilde p_\ell) \}
\end{align}
where $\alpha_0, \alpha_\kappa \in (0, 1)$, $\gamma_0, \gamma_\kappa, \digamma > 0$, and $\omega_{\mathbf u}(\mathbf u, p) \equiv | \bar {\mathbf u}(p) - \mathbf u|$ have been obtained beforehand,
as well as
\begin{align}
    | \Delta p_{\ell} | \leq \Lambda(\tilde p_{\ell}) + \nu \digamma \omega_{\mathbf u}(\mathbf u^{\ell}, \tilde p_{\ell})
\end{align}
by Lemma~\ref{lem:prox-perturbation}.
\end{subequations}

\begin{proposition}
	There exists $K \in \mathbb N$ such that, for all $\kappa \geq K$,
    the interconnection of \eqref{eq:dynopt} and \eqref{eq:dynopt-inner} is asymptotically stable with respect to the set
    \begin{align*}
        \Omega = \{ (\bar {\mathbf u}(p), p) \, | \, p \in \mathcal P_\star \}
    \end{align*}
    and $| \Delta p_\ell | \to 0$ as $\ell \to \infty$.
\end{proposition}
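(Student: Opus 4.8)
The plan is to invoke the small-gain theorem (Theorem~\ref{thm:smallgain}) with the two subsystem Lyapunov functions already available, $W_1 = J_\star$ for the outer parameter iteration and $W_2 = \omega_{\mathbf u}$ for the inner iteration, and then to verify that the composite measurement $\hat\omega = \|(J_\star, \omega_{\mathbf u})\|$ is a size function for $\Omega$. The gains recorded in \eqref{eq:pgd-gains} already give, for the outer subsystem, the self-gain $\gamma_{11}^x = \alpha_0 < 1$ and the cross-gain $\gamma_{12}^x(r) = \gamma_0\digamma\,r$; for the inner subsystem they give the self-gain $\alpha_\kappa < 1$ and the gain $\gamma_\kappa$ acting on $|\Delta p|$. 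The one piece not yet in the form required by Theorem~\ref{thm:smallgain} is that $|\Delta p_\ell|$ is not an external input but is bounded by $\Lambda(\tilde p_\ell) + \nu\digamma\,\omega_{\mathbf u}(\mathbf u^\ell,\tilde p_\ell)$, so the loop must be closed first.

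First I would eliminate $|\Delta p_\ell|$ using the comparison $\Lambda(p) \le b_2 J_\star(p)$ established (with $b_2$ linear, as $\mathcal P$ is compact) in the proof of Theorem~\ref{thm:iss-outer}. This gives $|\Delta p_\ell| \le b_2 J_\star(\tilde p_\ell) + \nu\digamma\,\omega_{\mathbf u}(\mathbf u^\ell,\tilde p_\ell) \le 2\max\{b_2 J_\star(\tilde p_\ell),\,\nu\digamma\,\omega_{\mathbf u}(\mathbf u^\ell,\tilde p_\ell)\}$. Since $\gamma_\kappa$ is linear with slope $\gamma_\kappa'$ (Proposition~\ref{prop:iss-inner}), substituting and distributing over the maximum yields
\[
W_2(\ell+1) \le \max\bigl\{ \alpha_\kappa W_2(\ell),\ 2\gamma_\kappa' b_2 W_1(\ell),\ 2\gamma_\kappa'\nu\digamma W_2(\ell) \bigr\}.
\]
Thus the effective inner self-gain is $\gamma_{22}^x = \max\{\alpha_\kappa,\,2\gamma_\kappa'\nu\digamma\}$ and the inner cross-gain is $\gamma_{21}^x(r) = 2\gamma_\kappa' b_2\,r$; together with $\gamma_{11}^x$ and $\gamma_{12}^x$ this is exactly the interconnection form of Theorem~\ref{thm:smallgain}, the external-input gains being zero since the system is autonomous.

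Next I would check the cyclic small-gain conditions, which for two subsystems reduce to $\gamma_{11}^x<\operatorname{id}$, $\gamma_{22}^x<\operatorname{id}$, and $\gamma_{12}^x\circ\gamma_{21}^x(r) = 2\gamma_0\digamma b_2\gamma_\kappa'\,r < \operatorname{id}$. The first holds unconditionally; the remaining two ask $2\gamma_\kappa'\nu\digamma < 1$ and $2\gamma_0\digamma b_2\gamma_\kappa' < 1$, each carrying $\gamma_\kappa'$ as a factor. Because $\gamma_\kappa' \to 0$ as $\kappa \to \infty$ (Proposition~\ref{prop:iss-inner}), there is $K \in \mathbb N$ with all conditions satisfied for every $\kappa \ge K$, and Theorem~\ref{thm:smallgain} then yields $\hat\omega$-input-to-state stability. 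As the interconnection has no external input, the estimate reduces to $\hat\omega(\mathbf u^\ell,\tilde p_\ell) \to 0$. Since $\hat\omega$ vanishes exactly where both $J_\star$ and $\omega_{\mathbf u}$ vanish, i.e. on $\Omega$, and is proper on the compact domain $\mathcal P\times\mathcal U$ (Lemma~\ref{lem:size-comparison}), this is asymptotic stability with respect to $\Omega$, giving $p^\ell\to\mathcal P_\star$ and $\mathbf u^\ell\to\bar{\mathbf u}(\tilde p_\ell)$. Finally, $|\Delta p_\ell| \le b_2 J_\star(\tilde p_\ell) + \nu\digamma\,\omega_{\mathbf u}(\mathbf u^\ell,\tilde p_\ell) \to 0$, as both terms vanish along the trajectory.

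I would expect the main obstacle to be the bookkeeping in closing the loop: one must isolate the genuinely linear inner self-gain $2\gamma_\kappa'\nu\digamma$ from the $|\Delta p|$-dependence so that $\gamma_{22}^x<\operatorname{id}$ survives the substitution, and one must confirm that $\hat\omega$ is a bona fide size function for $\Omega$ rather than merely a measurement function — delicate because $\omega_{\mathbf u}$ alone vanishes on the entire graph of $\bar{\mathbf u}(\cdot)$, and it is only the pairing with the size function $J_\star$ that restricts the parameter to $\mathcal P_\star$.
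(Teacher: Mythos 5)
Your proof is correct, and it takes a genuinely different decomposition from the paper's. The paper treats the increment $\Delta p$ as a \emph{third} subsystem state with its own Lyapunov function $|\Delta p|$: it derives a dynamic recursion $|\Delta p_{\ell+1}| \leq \nu\digamma\lambda_\star\eta^\kappa|\Delta p_\ell| + \max\{\cdot,\cdot\}$ by chaining the bound of Lemma~\ref{lem:prox-perturbation} with Lemma~\ref{lem:dissipation-inner}, which forces an extra smallness condition $\nu\digamma\lambda_\star\eta^K \in (0,1)$ and an additional absorption step (the constants $\varrho,\vartheta$) before the three-node small-gain cycle conditions $\gamma_2\gamma_\kappa < 1$ and $\gamma_0\gamma_1\gamma_\kappa < 1$ can be checked; the payoff is that $|\Delta p_\ell|\to 0$ falls out directly because $|\Delta p|$ is a component of the composite measurement $\hat\omega$. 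You instead observe that $\Delta p_\ell$ is a \emph{static} function of the current state $(\tilde p_\ell,\mathbf u^\ell)$ — via $|\Delta p_\ell| \leq \Lambda(\tilde p_\ell) + \nu\digamma\,\omega_{\mathbf u}(\mathbf u^\ell,\tilde p_\ell)$, with indices consistently at step $\ell$ — so it can be eliminated before invoking the small-gain theorem, giving a two-node interconnection with only the conditions $2\gamma_\kappa'\nu\digamma<1$ and $2\gamma_0\digamma b_2\gamma_\kappa'<1$, both granted by $\gamma_\kappa'\to 0$; you then recover $|\Delta p_\ell|\to 0$ a posteriori from the same static bound. Your route is leaner (it bypasses Lemma~\ref{lem:dissipation-inner} in the loop closure entirely and needs fewer cycle conditions), at the cost of the increment not being part of the ISS certificate itself. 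Two minor points: your appeal to Lemma~\ref{lem:size-comparison} for properness of $\hat\omega$ is not quite the right reference — the paper argues directly that $\omega_{\mathbf u}(\mathbf u,\tilde p) \geq |\mathbf u| - \upsilon_\Pi$ with $\upsilon_\Pi = \max_{p\in\Pi}|\bar{\mathbf u}(p)|$ on compact $\Pi$, and you should do the same rather than restrict attention to $\mathcal P\times\mathcal U$; and your $W_2 = \omega_{\mathbf u}$ depends on $\tilde p$ as well as $\mathbf u$, so it is not a function of the second subsystem's state alone as Theorem~\ref{thm:smallgain} formally requires — but the paper's own proof shares this abuse, so it is not a gap specific to your argument.
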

\begin{proof}
    Take $b > 0$ such that $\Lambda(p) \leq b J_\star(p)$ for all $p \in \mathbb R^l$; from \eqref{eq:pgd-gains} and Lemma~\ref{lem:dissipation-inner} we obtain
\begin{multline*}
    | \Delta p_{\ell+1} | \leq \Lambda(\tilde p_{\ell+1}) + \nu \digamma \omega_{\mathbf u}(\mathbf u^{\ell+1}, \tilde p_{\ell+1}) \\
\begin{aligned}
    &\leq \nu \digamma \lambda_\star \eta^\kappa | \Delta p_\ell | + \nu \digamma \eta^\kappa \omega_{\mathbf u}(\mathbf u^\ell, \tilde p_\ell) \\ 
    &\hspace{5em} + b \max\{ \alpha_0 J_\star(\tilde p_\ell), \gamma_0 \digamma \omega_{\mathbf u}(\mathbf u^\ell, \tilde p_\ell) \} \\
    %
    %
    &\leq \nu \digamma \lambda_\star \eta^\kappa | \Delta p_\ell | + \max\big\{ (1 + \nu \eta^\kappa (b \gamma_0)^{-1}) b \alpha_0 J_\star(\tilde p_\ell),
\end{aligned}
    \\
    (\nu \eta^\kappa + b \gamma_0) \digamma \omega_{\mathbf u}(\mathbf u^\ell, \tilde p_\ell) \big\}
\end{multline*}
If $K$ is sufficiently large, then $\nu \digamma \lambda_\star \eta^K \in (0,1)$ and, repeating the argument in Proposition~\ref{prop:iss-inner}, we obtain $\varrho \in (0, 1)$ and $\vartheta > 0$ satisfying, for all $\kappa \geq K$,
\begin{align}
    \label{eq:pgd-increment}
    | \Delta p_{\ell+1} | \leq \max\{ \varrho | \Delta p_\ell |, \gamma_1 J_\star(\tilde p_\ell), \gamma_2 \omega_{\mathbf u}(\mathbf u^\ell, \tilde p_\ell) \}
\end{align}
with $\gamma_1 = \vartheta (b + \nu \eta^\kappa {\gamma_0}^{-1}) \alpha_0$ and $\gamma_2 = \vartheta (\nu \eta^\kappa + b \gamma_0) \digamma$.

Combining \eqref{eq:pgd-gains} with \eqref{eq:pgd-increment} yields an interconnection of three $\omega$ISS-Lyapunov functions for $\mathbf u$, $\tilde p$, and $\Delta p$. Clearly, if $K$ is sufficiently large, then
\begin{gather*}
	\gamma_2 \cdot \gamma_\kappa < 1 \\
	\gamma_0 \cdot \gamma_1 \cdot \gamma_\kappa < 1
\end{gather*}
and by virtue of Theorem~\ref{thm:smallgain}, the interconnection of \eqref{eq:dynopt} and \eqref{eq:dynopt-inner} is $\hat \omega$-input-to-state stable, where
\begin{align*}
	\hat \omega: (\tilde p, \mathbf u, \Delta p) = \| (\omega_p(\tilde p), \omega_{\mathbf u}(\mathbf u, \tilde p), |\Delta p|) \|
\end{align*}
for all $\kappa \geq K$. We claim that $\hat \omega$ is a size function for $\Omega \times \{0\}$: Take $(\tilde p, \mathbf u, \Delta p) \in \mathbb R^l \times \mathbb R^{Nm} \times \mathbb R^l$, then $\hat \omega(\tilde p, \mathbf u, \Delta p) = 0$ if and only if $\tilde p \in \mathcal P_\star$, $\mathbf u = \bar {\mathbf u}(\tilde p)$, and $|\Delta p| = 0$. Suppose $\Pi \subset \mathbb R^l$ is compact; then, for any $(\tilde p, \mathbf u) \in \Pi \times \mathbb R^{Nm}$,
\begin{align*}
	\omega_{\mathbf u}(\mathbf u, \tilde p) \geq | \mathbf u | - | \bar {\mathbf u}(\tilde p) | \geq | \mathbf u | - \upsilon_\Pi
\end{align*}
where $\upsilon_\Pi = \max_{p \in \Pi} | \bar {\mathbf u}(p) | < \infty$ by Lipschitz continuity of $\bar {\mathbf u}(\cdot)$ on $\Pi$. Hence, since $\omega_p$ is a size function and $\| \cdot \|$ is monotone, $\hat \omega(\tilde p, \mathbf u, \Delta p) \to \infty$ as either $| p | \to \infty$, $| \mathbf u | \to \infty$, or $| \Delta p | \to \infty$.

We conclude that $(\tilde p_\ell, \mathbf u^\ell) \to \Omega$ and $| \Delta p_\ell | \to 0$ as $\ell \to \infty$. Noting that $\Delta p_\ell = p^\ell - \tilde p_\ell$, this is the desired result.
\end{proof}

\section{Special Case}
We have shown that the bilevel optimization scheme converges asymptotically to the optimal solution even if the inner-loop problem is solved inexactly.
However, specific conditions for the optimization problem of \eqref{eq:linear-system} and \eqref{eq:quadratic-cost} to satisfy Assumption~\ref{ass:iss-sufficient} remain an open research question. In order to illustrate our theoretic results we consider the case that the parameters only enters through an additive linear term, viz.
\begin{align}
	x_{k+1} = A x_k + B u_k + E p
\end{align}
and the quadratic cost is constant in the parameter. In this scenario, the parameter may correspond to the initial condition of the optimal control problem or an additional input that is constant over the control horizon. The inner-loop cost function now simplifies to
\begin{align}
	J(\mathbf u, p) = (\mathbf u, p)^\top H (\mathbf u, p) + g^\top (\mathbf u, p) + c
\end{align}
where the block matrix $H$ is positive semidefinite by positive definiteness of $Q$ and $R$.
Hence, the cost $J$ is convex in $(\mathbf u, p)$ and, since $U$ is convex, $\bar J(p) = \min_{\mathbf u} J(\mathbf u, p) + U(\mathbf u)$ is convex too. Consequently, $\mathcal P_\star$ is convex and the necessary conditions for optimality are sufficient, that is, $\Lambda(p) = 0$ if and only if $p \in \mathcal P_\star$. Assumption~\ref{ass:iss-sufficient} thus holds since $J_\star(p) \geq P(p)$ and $\Lambda(p) \geq \operatorname {dist}_{\mathcal P}(p)$ diverge for $|p| \to \infty$. Moreover, the gradient of $J$ is linear in $\mathbf u$ and $p$, viz.
\[
    \nabla_p J(\mathbf u, p) = (\mathbf u, p)^\top H_{p} + g_p^\top
\]
and its Lipschitz constant is equal to $\| H_p \|$.
\added[id=TC]{%
We conclude from the results of the previous section that the bilevel optimization is asymptotically stable if the number of inner-loop iterations $\kappa$ is sufficiently large.%
}

\section{Conclusion}
We have shown that convergence conditions for a class of bilevel optimization problems with inexactly implemented proximal gradient optimization can be derived using input-to-state stability arguments.  Future work will focus on broadening the class of problems and optimization algorithms for which similar approaches can be followed as well as on applications to real world problems motivating this work.


\bibliography{main,kolmanovsky}             

\appendix
\section*{Appendix}
Let $f: \mathbb R^n \to \overline {\mathbb R}$ be a proper convex and lower semicontinuous function; assume that the function $g: \mathbb R^n \to \mathbb R$ is differentiable and that its gradient $\nabla g$ is Lipschitz continuous on the domain of $f$ with constant $\lambda \geq 0$. Define
\begin{align*}
    h(\xi) = f(\xi) + g(\xi)
\end{align*}
for all $\xi \in \mathbb R^n$.
The following results are based on \citet[Lemmas 8--10]{Atchade2017}.

\begin{lemma}
    \label{lem:prox-constraint}
    Take $\nu > 0$; then
    \begin{multline*}
        f(\operatorname{prox}_{\nu f}(\xi)) - f(\xi') \\ \leq -\nu^{-1} \langle \operatorname{prox}_{\nu f}(\xi) - \xi', \operatorname{prox}_{\nu f}(\xi) - \xi \rangle
    \end{multline*}
    for all $\xi, \xi' \in \mathbb R^n$.
    $\lhd$
\end{lemma}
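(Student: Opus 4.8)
The plan is to characterize the proximal point through its first-order optimality condition and then invoke the subgradient inequality for the convex function $f$. First I would abbreviate $z = \operatorname{prox}_{\nu f}(\xi)$, so that by definition $z$ is the unique minimizer of the strongly convex map $y \mapsto \nu f(y) + \frac{1}{2} | y - \xi |^2$. Since the quadratic term is everywhere differentiable with gradient $z - \xi$ at $z$, the optimality condition $0 \in \partial\big(\nu f + \tfrac12 | \cdot - \xi |^2\big)(z)$ together with the subdifferential sum rule yields $\nu^{-1}(\xi - z) \in \partial f(z)$. This is the key step: it converts the variational definition of the proximity operator into a single usable subgradient at $z$.

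With that subgradient in hand, I would apply the defining inequality of the subdifferential of a convex function: for any $\xi' \in \mathbb R^n$,
\begin{align*}
    f(\xi') \geq f(z) + \big\langle \nu^{-1}(\xi - z), \, \xi' - z \big\rangle.
\end{align*}
Rearranging gives $f(z) - f(\xi') \leq \nu^{-1} \langle \xi - z, \, z - \xi' \rangle$, and a sign rewrite using symmetry of the inner product, $\langle \xi - z, z - \xi' \rangle = -\langle z - \xi, z - \xi' \rangle = -\langle z - \xi', z - \xi \rangle$, produces exactly the claimed bound $f(z) - f(\xi') \leq -\nu^{-1} \langle z - \xi', z - \xi \rangle$. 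The remaining effort is purely the bookkeeping of signs and the commutativity of the inner product, which is routine.

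The only genuine subtlety is that $f$ is extended-real-valued. I would note that $z$ necessarily lies in $\operatorname{dom} f$ (otherwise the proximal objective would equal $+\infty$ there, contradicting minimality against any point of the nonempty domain), so $f(z)$ is finite and the subgradient exists; and if $f(\xi') = +\infty$ the asserted inequality is vacuous since its left-hand side is $-\infty$. Thus I do not expect a real obstacle beyond confirming the applicability of the subdifferential sum rule -- which is valid because the added quadratic penalty is differentiable on all of $\mathbb R^n$ -- and tracking the sign in the final algebraic step.
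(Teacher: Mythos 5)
Your proof is correct. The paper itself gives no proof of this lemma --- it is stated with the $\lhd$ marker as a result imported from \citet[Lemmas 8--10]{Atchade2017} --- and your argument is exactly the standard one underlying that reference: the optimality condition for the proximal subproblem gives $\nu^{-1}(\xi - z) \in \partial f(z)$ with $z = \operatorname{prox}_{\nu f}(\xi)$ (the sum rule being applicable since the quadratic term is differentiable everywhere), and the subgradient inequality plus a sign rearrangement yields the claim. Your handling of the extended-real-valued edge cases ($z \in \operatorname{dom} f$ so $f(z)$ is finite, and the inequality being vacuous when $f(\xi') = +\infty$) is also sound, so the proposal cleanly supplies the proof the paper omits.
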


\begin{lemma}
    \label{lem:prox-value}
    Let $\nu > 0$ satisfy $\nu \leq \lambda^{-1}$; then
    \begin{multline*}
        h(\operatorname{prox}_{\nu f}(\xi)) - h(\xi') \leq -(2\nu)^{-1} | \operatorname{prox}_{\nu f}(\xi) - \xi' |^2 \\
        - \nu^{-1} \langle \operatorname{prox}_{\nu f}(\xi) - \xi', \xi' - \nu \nabla f(\xi') - \xi \rangle
    \end{multline*}
    for all $\xi, \xi' \in \mathbb R^n$.
\end{lemma}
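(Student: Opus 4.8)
The plan is to decompose $h = f + g$ and bound each summand separately, then add the two bounds and rearrange. Write $p \Def= \operatorname{prox}_{\nu f}(\xi)$ for brevity, and note that $p \in \operatorname{dom} f$ automatically; the inequality is trivial when $\xi' \notin \operatorname{dom} f$ (the left-hand side is then $-\infty$), so we may assume $\xi' \in \operatorname{dom} f$. For the nonsmooth part, Lemma~\ref{lem:prox-constraint} already supplies exactly what we need,
\[
    f(p) - f(\xi') \leq -\nu^{-1} \langle p - \xi', p - \xi \rangle,
\]
and this is the only place the variational (firm-nonexpansiveness) structure of the proximity operator is used.

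For the smooth part $g$, the idea is to invoke the standard descent lemma: since $\nabla g$ is $\lambda$-Lipschitz on $\operatorname{dom} f$, the quadratic upper bound
\[
    g(p) \leq g(\xi') + \langle \nabla g(\xi'), p - \xi' \rangle + \tfrac{\lambda}{2} | p - \xi' |^2
\]
holds. This is where the step-size hypothesis $\nu \leq \lambda^{-1}$ enters: it gives $\tfrac{\lambda}{2} \leq \tfrac{1}{2\nu}$, so the curvature coefficient may be replaced by $1/(2\nu)$. (I read the $\nabla f(\xi')$ in the displayed claim as a typo for $\nabla g(\xi')$, since $f$ is only assumed convex and lower semicontinuous, not differentiable.)

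Next I would add the two inequalities and simplify. The only nonroutine bookkeeping is expanding $\langle p - \xi', p - \xi \rangle$ by writing $p = (p - \xi') + \xi'$, which produces a term $-\nu^{-1} | p - \xi' |^2$. Combined with the $+\tfrac{1}{2\nu} | p - \xi' |^2$ inherited from the descent bound, the net quadratic coefficient collapses to exactly $-\tfrac{1}{2\nu}$, matching the first term on the right-hand side of the claim. Collecting the remaining terms linear in $p - \xi'$ — namely $-\nu^{-1}\langle p - \xi', \xi' \rangle$, $+\nu^{-1}\langle p - \xi', \xi \rangle$, and $+\langle \nabla g(\xi'), p - \xi' \rangle$ — and factoring out $-\nu^{-1}$ reproduces precisely the inner product $-\nu^{-1} \langle p - \xi', \xi' - \nu \nabla g(\xi') - \xi \rangle$ in the statement.

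I expect no genuine obstacle here: both ingredients are standard, and everything else is a one-line algebraic identity. The only point requiring care is ensuring the two bounds are evaluated at the same pair $(p, \xi')$ and that the two quadratic coefficients are merged correctly under $\nu \leq \lambda^{-1}$; once that is respected, the target inequality falls out by direct substitution.
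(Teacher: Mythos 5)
Your proof is correct and takes essentially the same route as the paper's: the descent lemma for $g$ with the curvature coefficient $\lambda/2$ replaced by $(2\nu)^{-1}$ via $\nu \leq \lambda^{-1}$, combined with Lemma~\ref{lem:prox-constraint} evaluated at $\zeta = \operatorname{prox}_{\nu f}(\xi)$, followed by the same algebraic regrouping. Your reading of $\nabla f(\xi')$ in the statement as a typo for $\nabla g(\xi')$ is also right — the paper's own proof yields $\nabla g(\xi')$, and the lemma's application in Theorem~\ref{thm:iss-outer} (where the smooth part is $\bar J$ and the perturbed step gives $\xi' - \nu\nabla g(\xi') - \xi = \nu d$) confirms that the gradient of the smooth summand is intended.
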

\begin{proof}
    By Lipschitz continuity of $\nabla g$, we have that
    \[
        g(\zeta) - g(\xi') \leq \langle \nabla g(\xi'), \zeta - \xi' \rangle + (2\nu)^{-1} | \zeta - \xi' |^2
    \]
    for all $\zeta, \xi' \in \mathbb R^n$. Setting $\zeta = \operatorname{prox}_{\nu f}(\xi)$ for any $\xi \in \mathbb R^n$ and applying Lemma~\ref{lem:prox-constraint} yields the desired result.
\end{proof}

\begin{lemma}
    \label{lem:prox-perturbation}
    Take $\nu > 0$ and $\eta \in \mathbb R^n$; then
    \begin{align*}
        | \operatorname{prox}_{\nu f}(\xi + \eta) - \operatorname{prox}_{\nu f}(\xi) | \leq | \eta |
    \end{align*}
    for all $\xi \in \mathbb R^n$.
\end{lemma}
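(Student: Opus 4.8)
The plan is to recognize that this statement is nothing more than the nonexpansiveness of the proximity operator applied to the two points $\xi + \eta$ and $\xi$. The most direct route is to invoke Property~\ref{prop:prox-nonexp}: since $f$ is proper, convex, and lower semicontinuous and $\nu > 0$, that property already guarantees $|\operatorname{prox}_{\nu f}(x_1) - \operatorname{prox}_{\nu f}(x_2)| \leq |x_1 - x_2|$ for all $x_1, x_2 \in \mathbb R^n$. Setting $x_1 = \xi + \eta$ and $x_2 = \xi$ then yields $|\operatorname{prox}_{\nu f}(\xi + \eta) - \operatorname{prox}_{\nu f}(\xi)| \leq |(\xi + \eta) - \xi| = |\eta|$, which is exactly the claim. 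In this sense the lemma is a one-line corollary, stated separately only because it is invoked in the increment bound for $|\Delta p_\ell|$ used in the interconnection analysis.

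Should a self-contained argument be preferred over citing Property~\ref{prop:prox-nonexp}, I would instead start from the first-order optimality conditions defining the proximity operator. Writing $u = \operatorname{prox}_{\nu f}(\xi + \eta)$ and $v = \operatorname{prox}_{\nu f}(\xi)$, optimality of the two proximal minimizations gives the inclusions $(\xi + \eta) - u \in \nu \partial f(u)$ and $\xi - v \in \nu \partial f(v)$. Monotonicity of the subdifferential $\partial f$ of the convex function $f$ then implies $\langle [(\xi + \eta) - u] - [\xi - v], \, u - v \rangle \geq 0$, which rearranges to $\langle \eta, u - v \rangle \geq |u - v|^2$ after dividing out the positive factor $\nu$. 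Applying the Cauchy--Schwarz inequality to the left-hand side yields $|\eta|\,|u - v| \geq |u - v|^2$, and dividing by $|u - v|$ (the case $u = v$ being trivial) delivers $|u - v| \leq |\eta|$.

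Either route is entirely routine, so I do not anticipate a genuine obstacle. The only step requiring minor care is the monotonicity estimate in the self-contained version: one must use that $\partial f$ is a monotone operator, which holds precisely because $f$ is convex, and that $\nu > 0$ so that the common factor may be cancelled without affecting the sign of the inner product. Given that Property~\ref{prop:prox-nonexp} is already available in the paper, I would present the first, immediate derivation and reserve the monotonicity argument only as an optional remark.
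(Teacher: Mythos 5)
Your primary argument is exactly the paper's proof: the lemma follows immediately from Property~\ref{prop:prox-nonexp} by setting $x_1 = \xi + \eta$ and $x_2 = \xi$. The optional self-contained monotonicity argument is also correct, but the paper takes the one-line route you present first.
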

\begin{proof}
    Follows immediately from Property~\ref{prop:prox-nonexp}.
\end{proof}

\end{document}